\documentclass{article}
\usepackage{amsmath,amssymb,amsthm}
\newtheorem{Thm}{Theorem}
\newtheorem{Lem}{Lemma}

\theoremstyle{definition}

\newtheorem{Rem}{Remark}

\begin{document}

\title{A remark on the uniqueness of positive solutions to semilinear elliptic equations with double power nonlinearities}
\author{Shinji Kawano  \\ Department of Mathematics \\ Hokkaido University \\ Sapporo  060-0810, Japan}
\date{}
\maketitle

\begin{abstract}

We consider the uniqueness of positive solutions to
\begin{equation}
 \begin{cases}
   \triangle u- \omega u+u^p-u^{2p-1}=0  & \text{in  $\mathbb{R}^n$},\\
   \displaystyle \lim_{\lvert x \rvert \to \infty} u(x)  =0. \label{introduction}
 \end{cases}  
\end{equation}
It is known that for fixed $p>1$, a positive solution to \eqref{introduction}  exists if and only if $\omega \in (0, \omega_p)$, 
where $\omega_p:=\dfrac{p}{(p+1)^2}$.
We deduce the uniqueness in the case where $\omega$ is close to $\omega_p$,
from the argument in the classical paper by Peletier and Serrin~\cite{PS},
thereby recovering a part of the uniqueness result of Ouyang and Shi~\cite{OS} 
for all $\omega \in (0, \omega_p)$.
\end{abstract}

\section{Introduction}

We shall consider a boundary value problem
\begin{equation}
 \begin{cases}
   u_{rr}+ \dfrac{n-1}{r}u_r -\omega u+u^p-u^{2p-1}=0 & \text{for $r>0$}, \\
   u_r(0)=0, \\
   \displaystyle \lim_{r \to \infty} u(r) =0, \label{b}
 \end{cases} 
\end{equation} 
where $n \in \mathbb{N}$, $p>1$ and $\omega >0$.
The above problem arises in the study of 
\begin{equation}
 \begin{cases}
   \triangle u- \omega u+u^p-u^{2p-1}=0  & \text{in  $\mathbb{R}^n$},\\
   \displaystyle \lim_{\lvert x \rvert \to \infty} u(x)  =0. \label{a}
 \end{cases}  
\end{equation}
Indeed, the classical work of Gidas, Ni and Nirenberg~\cite{G1,G2} tells us that any positive solution to \eqref{a} 
is radially symmetric. 
On the other hand, for a solution $u(r)$ of \eqref{b}, $v(x):=u(\lvert x \rvert)$ is a solution to \eqref{a}.  

The condition to assure the existence of positive solutions to \eqref{a} (and so \eqref{b}) was given by 
Berestycki and Lions~\cite{B1} and Berestycki, Lions and Peletier~\cite{B2}:
A solution to \eqref{b} with fixed $p>1$ exists if and only if $\omega \in (0, \omega_p)$, where 
\begin{equation*}
\omega_p=\dfrac{p}{(p+1)^2}.
\end{equation*}
We shall review what this $\omega_p$ is for in Section 2.
Throughout this paper, \textit{a solution} means a classical solution.

Uniqueness of positive solutions to \eqref{b} had long remained unknown.
Finally in 1998 Ouyang and Shi~\cite{OS} proved uniqueness for \eqref{b} with all $\omega \in (0, \omega_p)$, $p>1$.
See also Kwong and Zhang~\cite{KZ}.

In this present paper, we prove that for $\omega$ close to $\omega_p$, the uniqueness result is obtained directly 
from the classial result given by Peletier and Serrin~\cite{PS} in 1983.
For another attempt to obtain the uniqueness when $\omega$ is close to $\omega_p$, see Mizumachi~\cite{M}.
Our result of the present paper is the following:
\begin{Thm}
Let $n \in \mathbb{N}$, $p>1$ and $\omega\in[a_p,\omega_p)$, where $a_p:=\dfrac{p(7p-5)}{4(p+1)(2p-1)^2}$.
Then \eqref{b} has exactly one positive solution.  \label{thm}
\end{Thm}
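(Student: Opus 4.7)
The plan is to verify, for each $\omega\in[a_p,\omega_p)$, the hypotheses of Peletier-Serrin's uniqueness theorem~\cite{PS} for the nonlinearity $f(u):=-\omega u+u^p-u^{2p-1}$, and thereby conclude that \eqref{b} has at most one positive solution (existence being already known from~\cite{B1,B2}).

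First I would record the elementary algebra of $f$. Setting $v:=u^{p-1}$, the positive zeros of $f$ solve $v^2-v+\omega=0$, so
\[
\beta^{p-1}=\tfrac{1}{2}\bigl(1-\sqrt{1-4\omega}\bigr),\qquad b^{p-1}=\tfrac{1}{2}\bigl(1+\sqrt{1-4\omega}\bigr),
\]
with $f<0$ on $(0,\beta)$ and $f>0$ on $(\beta,b)$. A short computation gives $uf'(u)-f(u)=(p-1)u^p(1-2u^{p-1})$, so $f(u)/u$ attains its unique maximum on $(\beta,b)$ at $\gamma:=2^{-1/(p-1)}$. The existence condition $F(b)>0$ with $F:=\int_0^\cdot f$ is precisely $\omega<\omega_p$, as recalled in Section~2. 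These observations cover the qualitative structural hypotheses of Peletier-Serrin (two sign changes of $f$, the right shape of $f/u$, and the Pohozaev sign).

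The non-trivial step is the quantitative Peletier-Serrin hypothesis, a Pohozaev-type sign/monotonicity condition on $f$ over $(\beta,b)$. I would write this condition out for our explicit $f$, then rationalize the expressions by the substitution $v=u^{p-1}$, using the identity $v^2=v-\omega$ (satisfied at the endpoints and useful after integration) to eliminate quadratic terms wherever possible. This reduces the condition to an explicit polynomial inequality in $\omega$ and $p$. The expected boundary case, $\omega=a_p=p(7p-5)/\bigl[4(p+1)(2p-1)^2\bigr]$, should then emerge as the value at which equality is first attained as $\omega$ decreases from $\omega_p$; heuristically, the factor $(2p-1)^2$ in the denominator reflects the exponent $2p-1$ in the outer power of $f$, while $p+1$ and $7p-5$ arise from combining the linear, $p$-th, and $(2p-1)$-th power contributions to $F$ and to $uf$.

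The main obstacle is thus purely computational: locating the correct Peletier-Serrin inequality in an explicit workable form, and checking that on $(\beta,b)$ it is verified precisely when $\omega\ge a_p$. Once that is established, the remaining Peletier-Serrin hypotheses (smoothness of $f$ on $(0,\infty)$, $f'(\beta)>0$, and rapid decay of candidate solutions at infinity) are immediate from the explicit form of $f$, and their theorem yields exactly one positive solution of \eqref{b}, completing the proof of Theorem~\ref{thm}.
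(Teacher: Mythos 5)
Your overall strategy --- verify the hypotheses of the Peletier--Serrin uniqueness theorem for $f(u)=-\omega u+u^p-u^{2p-1}$ --- is exactly the paper's, but the proposal stops short of the one step that carries all the content, and the sketch of that step points in a direction that would not work as described. First, a structural point: the quantitative Peletier--Serrin hypothesis is that $G(u):=f(u)/(u-\beta)$ be nonincreasing on $(\beta,c)$, where $\beta$ is the first positive zero of the \emph{primitive} $F=\int_0^u f$, not the first positive zero of $f$ (which you call $\beta$; in the paper's notation the zeros of $f$ are $b<c$ and $b<\beta<c$). Setting up the condition on the wrong interval with the wrong base point is not a harmless relabelling, because the whole point of $a_p$ is its relation to this particular $\beta$.

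Second, and more importantly, your plan to ``rationalize via $v=u^{p-1}$ and reduce to an explicit polynomial inequality in $\omega$ and $p$ whose threshold is $a_p$'' misdescribes what $a_p$ is. The paper's mechanism is soft: writing $k(u):=f'(u)(u-\beta)-f(u)$ (the numerator of $G'$ up to the positive factor $(u-\beta)^{-2}$), one has $k'(u)=f''(u)(u-\beta)$, and $f''$ has a single zero $\alpha=[p/(2(2p-1))]^{1/(p-1)}$ with $f''>0$ before it and $f''<0$ after. Hence if $\alpha\le\beta$, then $k$ is decreasing on $(\beta,c)$ and $k(u)<k(\beta)=-f(\beta)<0$ there (using $f>0$ on $(b,c)\ni\beta$), which is (H3). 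The elementary computation $\alpha\le\beta\iff\omega\ge a_p$ is where $a_p=p(7p-5)/[4(p+1)(2p-1)^2]$ comes from. In particular $a_p$ is \emph{not} the value at which the Peletier--Serrin inequality first fails as $\omega$ decreases: for $\alpha>\beta$ the condition (H3) can still hold provided $k(\alpha)\le 0$, as the paper notes, so the sharp threshold you expect your polynomial inequality to produce is strictly below $a_p$ and is not given by a closed formula. Without identifying the convexity argument (the sign of $f''$ past $\beta$) the proposal remains a plan rather than a proof; the remaining hypotheses you list as ``immediate'' are indeed routine.
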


\begin{Rem}
Note that 
\begin{equation*}
0<a_p<\omega_p=\dfrac{p}{(p+1)^2}, \qquad p>1.
\end{equation*}
\end{Rem}

In the next section we clarify the definitions of $\omega_p$ and $a_p$ from the point of view from~\cite{PS}.

\section{Study of the nonlinearity as a function}

In this section, we study the properties of the function $f(u):=-\omega u+u^p-u^{2p-1}$ in $(0, \infty)$,
where $\omega>0$ and $p>1$ are given constants.

First we define $F(u):=\displaystyle \int_0^u f_{\omega,p}(s)ds$, and by a direct calculation we have
\begin{align}
   F(u) &= -\frac{\omega}{2}u^2+\frac{u^{p+1}}{p+1}-\frac{u^{2p}}{2p} \notag   \\
        &= \frac{u^2}{2p(p+1)}\left[ -\omega p(p+1)+2pu^{p-1}-(p+1)u^{2(p-1)}\right].  \label{c}
\end{align}
There are two cases of concern:
\begin{itemize}
\item[(a)]~~~~~ $\omega < \omega_p$ $\Longleftrightarrow$ $F$ has two zeros in $(0,\infty)$.
\item[(b)]~~~~~ $\omega \ge \omega_p$ $\Longleftrightarrow$ $F$ has at most one zero in $(0,\infty)$.
\end{itemize}

The condition to assure the existence of positive solutions of \eqref{b} given in~\cite{B1,B2} is the following;
\begin{Lem}
The problem \eqref{b} has a positive solution if and only if both of the following hypotheses are fulfilled:
\begin{itemize}
\item[(H1)] $\displaystyle \lim_{u \to +0}\frac{f(u)}{u}$ exists and is negative,
\item[(H2)] $F(\delta)>0$ for some positive constant $\delta$.
\end{itemize} \label{key}
\end{Lem}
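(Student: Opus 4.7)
The plan is to recognize this lemma as the existence characterization of Berestycki--Lions~\cite{B1} and Berestycki--Lions--Peletier~\cite{B2} specialized to our $f$, and prove the two implications separately.

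For \emph{sufficiency}, assume (H1) and (H2). I would employ the constrained variational method of~\cite{B1}. Set $T(u):=\int_{\mathbb{R}^n}|\nabla u|^2\,dx$ and $V(u):=\int_{\mathbb{R}^n}F(u)\,dx$ on the radial Sobolev space $H^1_{\mathrm{rad}}(\mathbb{R}^n)$, and minimize $T$ under the constraint $V=1$. Hypothesis (H2) provides a test function $w$ with $V(w)>0$ (a smooth radial cutoff of the constant value $\delta$ works), and the dilation $w_\lambda(x):=w(x/\lambda)$ tunes $V$ to any prescribed positive value, so the constraint set is non-empty. The mass bound $-F(u)\geq \tfrac{\omega}{4}u^2$ near $0$ coming from (H1), together with the dominant negative term $-u^{2p-1}$ in $f$ at infinity, yield $H^1$-coercivity. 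The Strauss radial compactness lemma then extracts a non-trivial limit $u^{*}\geq 0$ from a minimizing sequence, and the Lagrange multiplier rule gives $-\Delta u^{*}=\theta f(u^{*})$ with $\theta>0$. Rescaling by $\sqrt{\theta}$, together with the symmetry theorem of Gidas--Ni--Nirenberg~\cite{G1,G2} (which also reduces \eqref{a} to \eqref{b}), produces the desired positive radial solution.

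For \emph{necessity}, let $u$ be a positive solution of \eqref{b}. Condition (H1) is obtained from asymptotic analysis at $r=\infty$: the linearization $v''+\tfrac{n-1}{r}v'+\alpha v=0$, with $\alpha:=\lim_{s\to 0^+}f(s)/s$ (which must exist if $u$ is a classical decaying solution), governs the behavior of $u$ at infinity, and a positive decaying solution is incompatible with $\alpha\geq 0$ by a standard Bessel-function / maximum-principle argument. Condition (H2) comes from the Pohozaev identity: multiplying the PDE by $x\cdot\nabla u$ and integrating over $\mathbb{R}^n$ yields $\tfrac{n-2}{2n}\int|\nabla u|^2=\int F(u)$, so $F(u(r))>0$ on a set of positive radii, and picking $\delta:=u(r_0)$ for any such $r_0$ gives (H2).

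The main obstacle is compactness in the variational step, since radial minimizing sequences in $H^1(\mathbb{R}^n)$ may still concentrate at a point or spread to infinity; controlling them requires the compact embedding of $H^1_{\mathrm{rad}}$ into subcritical $L^q$ spaces together with the precise shape of $F$ near the origin. A secondary subtlety is that the Pohozaev identity degenerates when $n=1$, so in that case one substitutes a direct phase-plane shooting argument, which is also the route taken by Peletier--Serrin~\cite{PS}.
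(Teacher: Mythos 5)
The paper does not actually prove this lemma: it is imported verbatim from Berestycki--Lions~\cite{B1} and Berestycki--Lions--Peletier~\cite{B2}, and the only argument supplied in its vicinity is the one-line translation of (H1)--(H2) into $\omega\in(0,\omega_p)$ in Lemma~\ref{condexlem}. So your proposal cannot be compared with an in-paper proof, only with the cited sources. Your sufficiency outline is the right machinery (constrained minimization of $T$ on $\{V=1\}$, dilations to populate the constraint set, Strauss radial compactness, Lagrange multiplier plus rescaling), and for this particular $f$ the growth hypotheses of~\cite{B1} \emph{beyond} (H1)--(H2) do hold because the dominant term at infinity is $-u^{2p-1}<0$; you should check this explicitly, since (H1)--(H2) alone are not the full hypothesis set of the variational theorem. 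Note also that the variational route needs a separate treatment for $n=2$ as well as $n=1$, whereas the shooting argument of~\cite{B2} covers all $n$ at once and is better adapted to the radial problem \eqref{b} that the lemma is actually about.

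The genuine gap is in the necessity of (H1). For this $f$ the limit $\lim_{u\to 0^+}f(u)/u=-\omega$ always exists, so what must be excluded is $\omega\le 0$. Your oscillation/Bessel argument disposes of $\omega<0$, but in the borderline case $\omega=0$ one has $f(u)=u^p(1-u^{p-1})>0$ for small $u$, so a solution is superharmonic and positive where it is small --- and positive superharmonic functions decaying to zero are plentiful (e.g.\ $|x|^{2-n}$), so no maximum-principle contradiction arises. Ruling out this ``zero mass'' case is precisely the delicate half of the necessity statement and requires a dedicated decay or energy analysis; as written, your argument does not close it. A secondary issue: your Pohozaev derivation of (H2) yields $\int F(u)>0$ only for $n\ge 3$; for $n=2$ it gives $\int F(u)=0$ and you must add that $F(u(x))<0$ near infinity to conclude, and for $n=1$ it gives the wrong sign. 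For the radial problem the cleaner tool is the energy $E(r)=\tfrac12 u_r(r)^2+F(u(r))$, which satisfies $E'(r)=-\tfrac{n-1}{r}u_r^2\le 0$ and $E(\infty)=0$, whence $F(u(0))=E(0)>0$ for $n\ge 2$, i.e.\ (H2) with $\delta=u(0)$; the case $n=1$ then reduces to the phase-plane argument you already flagged.
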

\begin{Lem}
The problem \eqref{b} has a positive solution if and only if 
\begin{equation*}
\omega \in (0,\omega_p)   
\end{equation*}    
for $p>1$.
\label{condexlem}
\end{Lem}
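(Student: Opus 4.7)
The plan is to apply Lemma \ref{key} and determine for which $\omega$ both hypotheses (H1) and (H2) are satisfied. Since Lemma \ref{key} provides an iff characterization, establishing $\omega \in (0,\omega_p)$ as the exact range for (H1)$\wedge$(H2) finishes the proof.

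First I would handle (H1). Writing
\begin{equation*}
\frac{f(u)}{u} = -\omega + u^{p-1} - u^{2(p-1)},
\end{equation*}
and using $p>1$, the limit as $u \to +0$ equals $-\omega$. Hence (H1) holds if and only if $\omega > 0$. This is essentially immediate.

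Next I would analyze (H2) using the factorization \eqref{c}: since $u^2/[2p(p+1)] > 0$ for $u>0$, the existence of $\delta>0$ with $F(\delta) > 0$ is equivalent to the existence of $u>0$ with
\begin{equation*}
-\omega p(p+1) + 2p u^{p-1} - (p+1) u^{2(p-1)} > 0.
\end{equation*}
Substituting $v := u^{p-1}$ (a bijection of $(0,\infty)$ onto itself), the question reduces to whether the concave quadratic $h(v) := -(p+1)v^2 + 2p v - \omega p(p+1)$ attains a positive value for some $v>0$. Its maximum over $\mathbb{R}$ occurs at $v_* = p/(p+1) > 0$, with
\begin{equation*}
h(v_*) = \frac{p^2}{p+1} - \omega p(p+1) = p(p+1)\bigl(\omega_p - \omega\bigr).
\end{equation*}
Thus (H2) holds if and only if $\omega < \omega_p$.

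Combining the two characterizations, (H1) and (H2) both hold if and only if $\omega \in (0,\omega_p)$, and Lemma \ref{key} completes the proof. I do not expect any genuine obstacle: the argument is a routine maximization of the quadratic in $v$ obtained from the explicit formula \eqref{c}, and the only point requiring minor care is recording that the substitution $v = u^{p-1}$ is a bijection on $(0,\infty)$ for $p>1$, so that maximizing over $v>0$ is equivalent to maximizing over $u>0$.
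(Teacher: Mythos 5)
Your proposal is correct and follows the paper's own route: the paper likewise reduces the claim to checking that (H1) is equivalent to $\omega>0$ and (H2) is equivalent to $\omega<\omega_p$ via the factorization \eqref{c}, merely omitting the explicit maximization of the quadratic in $v=u^{p-1}$ that you carry out. Your computation of $h(v_*)=p(p+1)(\omega_p-\omega)$ checks out, so nothing is missing.
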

\begin{proof}
(H1) is equivalent to the condition $\omega > 0$.
(H2) is equivalent to the condition (a) above.
\end{proof}

This is the origin of $\omega_p$. Next we turn to the exponent $a_p$.

As a preparation, we calculate the derivatives of $f(u)=-\omega u+u^p-u^{2p-1}$:
\begin{align*}
  f'(u)    &= -\omega+pu^{p-1}-(2p-1)u^{2(p-1)},  \\
  f''(u)   &= 2(p-1)(2p-1)u^{p-2}\left[ \dfrac{p}{2(2p-1)} -u^{p-1} \right].
\end{align*}

We shall introduce four positive constants $\alpha$, $b$, $c$ and $\beta$. 
\begin{itemize}
\item Let $\alpha$ denote the unique zero of $f''$ in $(0,\infty)$: 
$\displaystyle \alpha=\left[ \frac{p}{2(2p-1)}\right]^{\frac{1}{p-1}}$.
\item Let $b$ denote the first zero of $f$ in $(0,\infty)$:
$\displaystyle b=\left[ \frac{1-\sqrt{1-4\omega}}{2}\right]^{\frac{1}{p-1}}$.
\item Let $c$ denote the last zero of $f$ in $(0,\infty)$:
$\displaystyle c=\left[ \frac{1+\sqrt{1-4\omega}}{2}\right]^{\frac{1}{p-1}}$.
\item Let $\beta$ denote the first zero of $F$ in $(0,\infty)$:
$\displaystyle \beta=\left[ \frac{p}{p+1} \left(1-\sqrt{1-\frac{(p+1)^2}{p}\omega} \right)\right]^{\frac{1}{p-1}}$.
\end{itemize}
It is easy to check that 
\begin{equation}
  \beta \in (b, c) \label{ineq}  
\end{equation}
either by observing the graphs or by a straightforward calculation.
From \eqref{ineq} we deduce
\begin{equation}
f(\beta)>0,\label{z}
\end{equation} 
which will be used later.

We are not able to give a clear explanation on the relation between $\alpha$ and $\beta$.
\begin{Lem}
The condition $\alpha \le \beta$ is equivalent to 
$\omega \ge a_p=\displaystyle \frac{p(7p-5)}{4(p+1)(2p-1)^2}$. \label{eta}
\end{Lem}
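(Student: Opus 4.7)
The plan is to reduce the inequality $\alpha \le \beta$ to a purely algebraic inequality in $\omega$ and $p$, then manipulate it until the claimed expression for $a_p$ appears.

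Since $p>1$, the map $t \mapsto t^{p-1}$ is strictly increasing on $(0,\infty)$, so $\alpha \le \beta$ is equivalent to $\alpha^{p-1} \le \beta^{p-1}$. Using the explicit formulas listed before the statement, this reads
\begin{equation*}
\frac{p}{2(2p-1)} \le \frac{p}{p+1}\left(1-\sqrt{1-\frac{(p+1)^2}{p}\omega}\right).
\end{equation*}
Dividing by $p$ and multiplying by $p+1$, I would rewrite this as an inequality isolating the square root:
\begin{equation*}
\sqrt{1-\frac{(p+1)^2}{p}\omega} \le 1-\frac{p+1}{2(2p-1)} = \frac{3(p-1)}{2(2p-1)}.
\end{equation*}
A quick check shows that the right-hand side is strictly positive for $p>1$, so both sides are nonnegative and squaring is a reversible operation.

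After squaring and rearranging, I would obtain
\begin{equation*}
\frac{(p+1)^2}{p}\,\omega \ge 1-\frac{9(p-1)^2}{4(2p-1)^2} = \frac{4(2p-1)^2-9(p-1)^2}{4(2p-1)^2}.
\end{equation*}
The step I expect to do the main work is the factorization of the numerator: expanding gives $4(2p-1)^2-9(p-1)^2 = 7p^2+2p-5$, and the key observation is that this factors as $(7p-5)(p+1)$. Once this factorization is in hand, the factor $p+1$ cancels against $(p+1)^2$ on the left, and solving for $\omega$ yields exactly
\begin{equation*}
\omega \ge \frac{p(7p-5)}{4(p+1)(2p-1)^2} = a_p.
\end{equation*}
Since every step in the derivation is an equivalence (the square root is real thanks to $\omega < \omega_p$, and both sides were verified nonnegative before squaring), this proves the equivalence claimed in the lemma. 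The only genuine difficulty is spotting the factorization $7p^2+2p-5=(7p-5)(p+1)$; everything else is routine manipulation of inequalities.
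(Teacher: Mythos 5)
Your proposal is correct and is precisely the ``simple calculation'' that the paper omits: raising to the power $p-1$, isolating the square root, squaring (legitimate since both sides are nonnegative for $p>1$ and $\omega<\omega_p$), and factoring $7p^2+2p-5=(7p-5)(p+1)$ all check out. Nothing to add.
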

\begin{proof}
A simple calculation.
\end{proof}
This is where our $a_p$ comes into play. In the next section, we see what this condition stands for.

\section{Proof of Theorem~\ref{thm}.}

First we state the result by Peletier and Serrin~\cite{PS}, which assures the uniqueness of solutions of \eqref{b}.
\begin{Lem}
Let $f$ satisfy (H1-3),
where (H1), (H2) are in Lemma~\ref{key}., and (H3) is the following:  
\begin{itemize}
\item[(H3)] $G(u):=\displaystyle \frac{f(u)}{u-\beta}$ is nonincreasing in 
$(\beta, c)$.
\end{itemize}
Then \eqref{b} has exactly one positive solution. \label{lem}
\end{Lem}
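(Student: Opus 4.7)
The strategy is a Peletier--Serrin comparison argument built on a Wronskian adapted to the zero $\beta$ of $F$. First I would view (b) as the initial value problem for the radial ODE with data $u(0)=d>0$, $u_r(0)=0$, yielding a unique local solution $u(\cdot;d)$. The radial energy $E(r):=\tfrac12 u_r(r)^2+F(u(r))$ satisfies $E'(r)=-\tfrac{n-1}{r}u_r(r)^2\le 0$, so $E$ is nonincreasing. For a positive solution of (b) one has $u(r),u_r(r)\to 0$ as $r\to\infty$, hence $E(\infty)=0$, which forces $F(d)=E(0)\ge 0$. Combined with (H1)--(H2) and the requirement $f(d)>0$ (needed so that $u$ actually decreases near the origin), this confines the initial value of any positive solution to the interval $(\beta,c)$, where $\beta$ and $c$ are the two positive zeros of $F$ and $f$ respectively that appear in the setup.

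Arguing by contradiction, suppose two distinct positive solutions $u_1,u_2$ of (b) exist, with $\beta<d_1<d_2<c$. The key object is the Wronskian-type quantity
\[
W(r):=r^{n-1}\bigl[(u_1(r)-\beta)\,u_2'(r)-(u_2(r)-\beta)\,u_1'(r)\bigr].
\]
Substituting $u_i''=-\tfrac{n-1}{r}u_i'-f(u_i)$ into $W'(r)$, the terms carrying the factor $(n-1)r^{n-2}$ cancel and one obtains the clean identity
\[
W'(r)=r^{n-1}(u_1(r)-\beta)(u_2(r)-\beta)\bigl[G(u_1(r))-G(u_2(r))\bigr],
\]
where $G$ is the function appearing in hypothesis (H3). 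On any subinterval where both $u_1(r),u_2(r)\in(\beta,c)$, (H3) fixes the sign of $G(u_1)-G(u_2)$ according to the sign of $u_2-u_1$, so $W'$ has a definite sign wherever $u_1\neq u_2$ there.

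The contradiction would come from comparing the integrated sign of $W'$ with boundary values of $W$. At $r=0$, the factor $r^{n-1}$ (together with $u_r(0)=0$) gives $W(0)=0$; at $r=\infty$, the linearization of the equation at $0$ together with (H1) yields the decay $u_i(r)\sim C_i r^{-(n-1)/2}e^{-\sqrt{\omega}\,r}$, and the same rate for $u_i'$, so $W(r)\to 0$. Hence $\int_0^\infty W'(r)\,dr=0$, which must be reconciled with $W'$ having a strict sign on a set of positive measure. The main obstacle is the tail regime where one or both of $u_1,u_2$ dip below $\beta$: there (H3) is silent on $G(u_1)-G(u_2)$, while the prefactor $(u_1-\beta)(u_2-\beta)$ has changed sign. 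Closing the argument therefore requires a careful companion analysis, showing that on $\{u_1,u_2>\beta\}$ the two solutions cannot cross (so $u_1<u_2$ there, pinning down the sign of the bracket once and for all via (H3)) and that the contributions to $W'$ coming from the tail are either of the same sign or can be estimated away. This tail analysis, which blends (H3) with Sturm-type comparison for the radial ODE, is where the substance of the Peletier--Serrin argument really lies.
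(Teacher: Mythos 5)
First, note that the paper does not prove this lemma at all: it is stated as a quotation of the result of Peletier and Serrin~\cite{PS}, so there is no in-paper argument to compare against. Judged on its own terms, your attempt has a genuine gap. The setup is sound: the confinement of the initial value $d$ to $(\beta,c)$ (for $n\ge 2$; for $n=1$ energy conservation forces $d=\beta$ and uniqueness is immediate), the identity
\[
W'(r)=r^{n-1}\bigl(u_1(r)-\beta\bigr)\bigl(u_2(r)-\beta\bigr)\bigl[G(u_1(r))-G(u_2(r))\bigr],
\]
and the boundary values $W(0)=\lim_{r\to\infty}W(r)=0$ are all correct. But the two points you defer at the end are not loose ends to be ``estimated away''; they are the entire content of the theorem. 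First, even on the region where both solutions exceed $\beta$, the sign of $G(u_1)-G(u_2)$ is pinned down by (H3) only if you already know $u_1<u_2$ there, i.e.\ that the solutions do not cross while above $\beta$; that non-crossing claim is itself a uniqueness-type assertion requiring proof. Second, in the tail both solutions eventually lie in $(0,\beta)$, where the prefactor $(u_1-\beta)(u_2-\beta)$ is again positive but (H3) says nothing about the monotonicity of $G$, and there is a transition zone where exactly one solution has dropped below $\beta$ and the prefactor is negative. Since $\int_0^\infty W'\,dr=0$, everything hinges on showing that the tail contribution cannot cancel the bulk contribution, and you supply no mechanism for this. This is precisely why Peletier and Serrin do not run a Wronskian in the variable $r$: their proof changes to the height variable, comparing $\lvert u_i'\rvert$ at equal heights $s=u_i(r)$ via the inverse functions and proving a monotone separation lemma, with (H3) entering as a structural condition on the resulting first-order system for $\tfrac12\lvert u_i'\rvert^2$ as a function of $s$. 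If you want to complete a proof of the lemma, that change of variables, not further estimates on $W$, is the missing idea.
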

Now we are in a position to prove Theorem~\ref{thm}.

\begin{proof}[Proof of Theorem~\ref{thm}]
We shall see that for $\omega\in[a_p,\omega_p)$, (H1-3) are satisfied.
It is enough to show that if $\omega \ge a_p$, then 
\begin{equation}
k(u):=f'(u)(u-\beta )-f(u)\le 0 \qquad \text{in}  ~~(\beta, c).\label{reallast}
\end{equation}
To prove \eqref{reallast} we calculate the derivative of $k(u)$
\begin{equation*}
k'(u)=f''(u)(u-\beta),
\end{equation*}
and note that
\begin{alignat*}{2}
  f''(u) &> 0 & \qquad & \text{in} ~(0,\alpha); \\
  f''(u) &< 0 & \qquad & \text{in} ~(\alpha,\infty).
\end{alignat*}
So if $\alpha\le \beta$ (i.e. $\omega \ge a_p$, see Lemma~\ref{eta}), 
then $k'(u)<0$ in $(\beta, c)$, i.e. $k$ is decreasing in the interval. 
Therefore
\begin{equation*}  
 k(u)<k(\beta)=-f(\beta)<0 \qquad \text{in~~  $(\beta, c)$},
\end{equation*}
where the last inequality follows by \eqref{z}.

This proves \eqref{reallast} and completes the proof.
\end{proof}
If $\alpha>\beta$, we need to check that $k(\alpha)\le 0$, i.e.
\begin{equation}
\alpha -\frac{f(\alpha)}{f'(\alpha)}\le \beta.   \label{last}
\end{equation}
This condition provides an implicit relation between $\omega$ and $p$.
Besides, 
\begin{Rem}
The condition~\eqref{last} does not cover all $\omega \in (0,\omega_p)$.
That is for $\omega$ close to zero, $\alpha -\dfrac{f(\alpha)}{f'(\alpha)}> \beta$.  
\end{Rem} 
\begin{proof}
The left hand side of \eqref{last} is estimated from below as
\begin{align*}
 \alpha-\frac{f(\alpha)}{f'(\alpha)} &= \frac{(p-1){\alpha}^p(1-2{\alpha}^{p-1})}{-\omega +p{\alpha}^{p-1}-(2p-1){\alpha}^{2(p-1)}} \\
                                     &> \frac{(p-1){\alpha}^p(1-2{\alpha}^{p-1})}{p{\alpha}^{p-1}-(2p-1){\alpha}^{2(p-1)}}>0,
\end{align*}
for all $\omega\in(0,\omega_p)$, 
whereas the right hand side $\beta$ decreases to zero as $\omega$ decreases to zero.
\end{proof}
When $\omega$ is close to zero, a very delicate observation is needed. 
See Ouyang and Shi~\cite{OS} for details.

\end{document}